\newtheorem{theorem}{Theorem}[section]
\newtheorem{lemma}[theorem]{Lemma}
\newtheorem{remark}[theorem]{Remark}
\newtheorem{corollary}[theorem]{Corollary}
\begin{document}
\title[Homogeneous Ricci almost solitons]
{Homogeneous Ricci almost solitons}
\author[Calvi\~{n}o-Louzao, Fern\'andez-L\'opez, Garc\'ia-R\'io, V\'{a}zquez-Lorenzo]{E. Calvi\~{n}o-Louzao, M. Fern\'andez-L\'opez, E. Garc\'ia-R\'io, 
R. V\'{a}zquez-Lorenzo}
\address{(E. C.-L.) Department of Mathematics , IES Ram\'{o}n Caama\~{n}o, Mux\'ia, Spain}
\address{(M. F.-L.) Department of Mathematics , IES Mar\'\i a Sarmiento, Viveiro, Spain}
\address{(E. G.-R.) Faculty of Mathematics,
University of Santiago de Compostela, 15782 Santiago de Compostela, Spain}
\address{(R. V.-L.) Department of Mathematics , IES de Ribadeo Dionisio Gamallo, Ribadeo, Spain}

\email{estebcl@edu.xunta.es, manufl@edu.xunta.es, eduardo.garcia.rio@usc.es,
ravazlor@edu.xunta.es}
\thanks{Supported by projects GRC2013-045, MTM2013-41335-P and EM2014/009 with FEDER funds (Spain).}
\subjclass[2010]{53C25, 53C20, 53C44}
\date{}
\keywords{Homogeneous manifolds, Ricci almost solitons}

\begin{abstract}
It is shown that a locally homogeneous proper Ricci almost soliton is either of constant sectional curvature or a product $\mathbb{R}\times N(c)$, where $N(c)$ is a space of constant curvature.
\end{abstract}

\maketitle

\section{Introduction}

A Riemannian manifold $(M,g)$ is said to be a \emph{Ricci soliton} if there exists a vector field $X$ on $M$ satisfying the equation 
\begin{equation}\label{eq:RicciSoliton}
\mathcal{L}_Xg+\rho=\lambda g
\end{equation}
where $\mathcal{L}$ denotes the Lie derivative, $\rho$ stands for the Ricci tensor 
 and $\lambda$ is a constant. 
Recently, a generalization of Equation \eqref{eq:RicciSoliton} has been considered in \cite{PRRS}, allowing $\lambda$ to be a smooth function $\lambda\in\mathcal{C}^\infty(M)$. $(M,g,X,\lambda)$ is said to be a \emph{Ricci almost soliton} if the vector field $X$ satisfies the Equation \eqref{eq:RicciSoliton} for some function $\lambda$. Since Ricci almost solitons contain Ricci solitons as a special case, we say that the  Ricci almost soliton is \emph{proper} if the soliton function $\lambda$ is non-constant. 
In the special case when the vector field $X$ is the gradient of a function $f$, Equation \eqref{eq:RicciSoliton} becomes 
$H_f+\rho=\lambda g$ (after rescaling of the potential function $f$), where $H_f$ denotes the Hessian tensor, and one refers to $(M,g,f,\lambda)$ as a \emph{gradient Ricci almost soliton}. 

Ricci almost solitons exhibit some similarities and striking differences when comparing with usual Ricci solitons. For instance no K\"ahler manifold admits proper gradient Ricci almost solitons \cite{Ma} in opposition to the Ricci soliton case. We just refer to \cite{BBR--2012, Barros-Ribeiro12, BCR-2013, CMMR, Gh, PRRS, WGX} for some generalizations of Ricci solitons and further references on Ricci almost solitons. 

Two basic problems in the study of Ricci almost solitons are to classify them under geometric conditions and to investigate whether a Ricci almost soliton is a gradient. An answer to the second question was given in \cite[Corollary 2]{BBR-2014}, where it is shown that any compact Ricci almost soliton with constant scalar curvature is a gradient one. The geometry of Ricci almost solitons was investigated in the gradient case in \cite{PRRS}, where   complete Einstein gradient Ricci almost solitons were classified.

Our main purpose in  this paper is to analyze the existence of  Ricci almost solitons under some symmetry conditions by showing the following 

\begin{theorem}
\label{lema:homras}
A locally homogeneous Riemannian manifold $(M,g)$ admits a proper Ricci almost soliton if and only if $(M,g)$ is either a space of constant sectional curvature or locally isometric to a product  $\mathbb{R}\times N(c)$, where $N(c)$ is a space of constant curvature.
\end{theorem}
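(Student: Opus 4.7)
The plan is to exploit local homogeneity to split the analysis into an Einstein case (handled by classical conformal rigidity) and a non-Einstein case (handled via a commutator-with-Killing construction). I first extract the algebraic content of $\mathcal{L}_X g+\rho=\lambda g$. Local homogeneity forces $s$ to be constant, whence $\operatorname{div}\rho=\tfrac{1}{2}ds=0$. Tracing the soliton equation gives $2\operatorname{div}(X)=n\lambda-s$, so $\lambda$ is non-constant iff $\operatorname{div}(X)$ is; isolating its trace-free part yields
\[ (\mathcal{L}_X g)_{0} \;=\; -\overset{\circ}{\rho}, \qquad \overset{\circ}{\rho}\;=\;\rho-\tfrac{s}{n}g. \]

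\emph{Einstein case.} If $\overset{\circ}{\rho}=0$, the displayed identity forces $(\mathcal{L}_X g)_{0}=0$, so $X$ is a conformal vector field whose conformal factor $(n\lambda-s)/(2n)$ is non-constant. The classical Yano--Nagano/K\"uhnel--Rademacher rigidity theorem---an Einstein Riemannian manifold of dimension $\geq 3$ admitting a non-homothetic conformal vector field has constant sectional curvature---closes this case; in dimension $2$, local homogeneity by itself gives constant curvature.

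\emph{Non-Einstein case.} The key observation is that for every Killing field $Y$ of $(M,g)$, applying $\mathcal{L}_Y$ to the soliton equation and using $\mathcal{L}_Y g=0$, $\mathcal{L}_Y \rho=0$ together with $[\mathcal{L}_Y,\mathcal{L}_X]=\mathcal{L}_{[Y,X]}$ produces
\[ \mathcal{L}_{[Y,X]}\, g\;=\;Y(\lambda)\,g, \]
so $[Y,X]$ is itself conformal, with conformal factor $\tfrac{1}{2}Y(\lambda)$. Because Killing fields span $T_pM$ at every point, one of two subcases occurs: either some $Y$ has $Y(\lambda)$ non-constant---so that $[Y,X]$ is a proper conformal vector field on the non-Einstein locally homogeneous space $(M,g)$---or $Y(\lambda)$ is constant for every Killing $Y$, in which case $\nabla\lambda$ is invariant under the full Killing algebra, has constant norm, and singles out a distinguished direction.

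In the first subcase I would combine the existence of a proper conformal vector field on a non-Einstein locally homogeneous manifold with the classification of locally homogeneous conformally flat Riemannian spaces, forcing $(M,g)$ to be locally isometric to $\mathbb{R}\times N(c)$ with $N(c)$ of constant curvature. In the second subcase the invariant distinguished direction promotes to a parallel vector field, giving by de Rham a local splitting $\mathbb{R}\times N$, after which the soliton equation restricted to $N$ combined with local homogeneity of $N$ identifies it as a space of constant curvature. \emph{The main obstacle} is the non-Einstein case: namely, upgrading the conformal vector field $[Y,X]$ (respectively the invariant direction) to the precise product structure $\mathbb{R}\times N(c)$ rather than a more general reducible or conformally flat homogeneous type.
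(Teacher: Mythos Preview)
Your commutator idea $\mathcal{L}_{[\xi,X]}g=\xi(\lambda)\,g$ is exactly the engine the paper uses, and your Einstein subcase is fine. The genuine gaps lie in the non-Einstein analysis. In your first subcase you assert that a proper conformal field on a non-Einstein locally homogeneous space forces the product form, but you never explain \emph{why} such a field forces local conformal flatness. The paper supplies precisely this missing step: since the flow of a conformal field $Z$ preserves the $(1,3)$-Weyl tensor, one computes $Z\|W\|^2=(2n+1)\,\xi(\lambda)\,\|W\|^2$; homogeneity makes $\|W\|^2$ constant, so $\|W\|^2=0$. Only then does Takagi's classification of locally conformally flat homogeneous spaces apply, and it yields \emph{three} types: constant curvature, $\mathbb{R}\times N(c)$, and $N_1(c)\times N_2(-c)$. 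You must still rule out the last one; the paper does this by observing that on $N_1(c)\times N_2(-c)$ the soliton field would be an affine conformal vector field (conformal collineation), and Tashiro's theorem then forces it to be genuinely conformal, contradicting non-Einsteinness.

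Your second subcase also has a gap: invariance of $\nabla\lambda$ under all Killing fields (equivalently $\mathcal{L}_\xi\nabla\lambda=0$) does \emph{not} by itself imply $\nabla\lambda$ is parallel---on a Lie group with left-invariant metric, left-invariant vectors are generally not parallel. The paper handles this homothetic situation differently and more cleanly: if some $\xi(\lambda)$ is a nonzero constant, subtract a combination of Killing fields so that the homothetic field $Z=[\xi,X]-\sum\kappa^\ell\xi_\ell$ vanishes at a chosen point $p$; since homotheties preserve $\rho$ and $Z(p)=0$, all Ricci eigenvalues vanish at $p$, hence everywhere by homogeneity, so $(M,g)$ is Ricci-flat and therefore flat (Spiro). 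Finally, you omit the converse direction entirely: one must exhibit proper Ricci almost solitons on spaces of constant curvature (any non-homothetic conformal field works) and on $\mathbb{R}\times N(c)$ (the paper takes $X=Z+\tfrac12\nabla f$ with $Z$ conformal and $f$ the rigid-soliton potential).
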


The situation is even more restrictive when gradient Ricci almost solitons are considered. We show in 
Lemma \ref{lemma-irreducibility} that proper gradient Ricci almost solitons are locally irreducible, from where one immediately gets:

\begin{corollary}
\label{th:lochomogars}
A locally homogeneous Riemannian manifold $(M,g)$  admits a proper gradient Ricci almost soliton if and only if $(M,g)$ is a space of non-zero constant sectional curvature.
\end{corollary}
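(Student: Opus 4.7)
The plan is to combine the two results already cited in the introduction: Theorem \ref{lema:homras} dichotomizes locally homogeneous proper Ricci almost solitons into constant curvature or a product $\mathbb{R}\times N(c)$, and Lemma \ref{lemma-irreducibility} asserts that proper \emph{gradient} Ricci almost solitons are locally irreducible. Together these collapse the possibilities to constant sectional curvature, and it then remains to rule out the flat case and to exhibit examples in nonzero curvature.

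For the forward direction, I would first apply Theorem \ref{lema:homras} to split into two cases. The product case $\mathbb{R}\times N(c)$ is locally reducible, which directly contradicts Lemma \ref{lemma-irreducibility}; hence $(M,g)$ has constant sectional curvature $c_0$. To rule out $c_0=0$, I would invoke the standard integrability condition for a gradient Ricci almost soliton: taking the divergence of $H_f+\rho=\lambda g$ using the Bochner identity $\mathrm{div}\,H_f=d(\Delta f)+\rho(\nabla f,\cdot)$ and the contracted second Bianchi identity $2\,\mathrm{div}\,\rho=dS$, and eliminating $\Delta f$ via the trace $\Delta f+S=n\lambda$, yields
\[
(n-1)\,\nabla\lambda=\tfrac{1}{2}\nabla S-\rho(\nabla f).
\]
Under constant sectional curvature $c_0$ one has $S=n(n-1)c_0$ and $\rho=(n-1)c_0\,g$, so this collapses to $\nabla\lambda=-c_0\,\nabla f$. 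If $c_0=0$ then $\lambda$ is constant, contradicting properness, so necessarily $c_0\neq 0$.

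For the converse I would produce one explicit example on each space form $N(c_0)$ with $c_0\neq 0$. Taking $f$ to be the restriction of a linear coordinate in the standard model (a first Laplace eigenfunction on the sphere, or the analogous height function on the hyperboloid in the Minkowski model), a direct computation of Obata type gives $H_f=-c_0 f\,g$; combined with $\rho=(n-1)c_0\,g$ this produces a proper gradient Ricci almost soliton with $\lambda=(n-1)c_0-c_0 f$, which is manifestly non-constant.

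I do not foresee any substantive obstacle: the derivation of the divergence identity is routine and already appears in the literature on Ricci almost solitons (see \cite{PRRS}), and the remaining pieces are packaged neatly by the two cited results. The only ``creative'' step is noticing that constant sectional curvature forces the identity to degenerate to $\nabla\lambda=-c_0\nabla f$, which is precisely what separates the flat case from the nontrivial one.
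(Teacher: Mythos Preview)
Your proposal is correct and follows essentially the same route as the paper: apply Theorem~\ref{lema:homras} to obtain the dichotomy, then invoke Lemma~\ref{lemma-irreducibility} to eliminate the reducible product $\mathbb{R}\times N(c)$. The only minor difference is in excluding $c_0=0$: the paper appeals to Lemma~\ref{lemma-irreducibility} once more (flat space splits locally as $\mathbb{R}\times\mathbb{R}^{n-1}$), whereas you compute the identity $(n-1)\nabla\lambda=\tfrac{1}{2}\nabla\tau-\operatorname{Ric}(\nabla f)$ directly to get $\nabla\lambda=-c_0\nabla f$; both arguments are equally valid, and your treatment of the converse via an explicit Obata-type solution is more explicit than the paper's.
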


The above results are in sharp contrast with the Ricci soliton case. Recall that locally homogeneous gradient Ricci solitons are rigid \cite[Theorem 1.1]{Petersen-Wylie09}, i.e., they are isometric to a product manifold $M=N\times\mathbb{R}^k$, where $N$ is an Einstein manifold and the potential function of the soliton is given by the norm of the position vector in the Euclidean part. 
Also note that while there exist many non-gradient homogeneous Ricci solitons (cf. \cite{Jablonski, Lauret} and references therein), the existence of proper Ricci almost solitons is a very restrictive condition in the homogeneous setting.

\medskip

A more general condition than  local homogeneity is that of \emph{curvature homogeneity} (i.e.,  for any two points $p,q\in M$ there is a linear isometry $\psi_{pq}:T_pM\rightarrow T_qM$ preserving the curvature tensor, $\psi_{pq}^*R_q=R_p$). Any locally homogeneous Riemannian manifold is curvature homogeneous and conversely if $(M,g)$ is $k$-curvature homogeneous (i.e.,  for any two points $p,q\in M$ there is a linear isometry $\psi_{pq}:T_pM\rightarrow T_qM$ preserving the curvature tensor and its covariant derivatives up to order $k$, $\psi_{pq}^*\nabla^\ell R_q=\nabla^\ell R_p$, $\ell=0,\dots,k$) for sufficiently large $k$, then $(M,g)$ is locally homogeneous (see \cite{BKoVa, Gi} for more information). There are, however, curvature homogeneous Riemannian manifolds whose curvature tensor does not correspond to any homogeneous space. 
Curvature homogeneous gradient Ricci almost solitons are described in the following result, which generalizes Corollary \ref{th:lochomogars}.

\begin{theorem}\label{th:curvhomog}
A curvature homogeneous Riemannian manifold  $(M,g)$ is a proper gradient Ricci almost soliton if and only if $(M,g)$  is a space of non-zero constant sectional curvature.
\end{theorem}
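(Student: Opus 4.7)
The plan is to first deduce that $(M,g)$ is Einstein, using only the scalar invariants forced to be constant by curvature homogeneity, and then to upgrade this to constant sectional curvature via the closed conformal vector field $\nabla f$, appealing to Lemma \ref{lemma-irreducibility} only to rule out Riemannian products.

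Curvature homogeneity makes $S$, $|\rho|^2$ and all Ricci eigenvalues constant. The standard consequence of differentiating the soliton equation, $\tfrac12\nabla S=\rho(\nabla f)+(n-1)\nabla\lambda$, then reduces to $\rho(\nabla f)=-(n-1)\nabla\lambda$. Differentiating this 1-form identity and replacing the Hessian of $f$ via $\nabla_a\nabla_b f=\lambda g_{ab}-R_{ab}$, the symmetric parts cancel; substituting the Codazzi-type identity
\begin{equation*}
\nabla_b R_{ca}-\nabla_c R_{ba}=R_{bca}{}^d\nabla_d f+\nabla_b\lambda\, g_{ca}-\nabla_c\lambda\, g_{ba}
\end{equation*}
into the antisymmetric part and using $R_{bcad}\nabla^a f\,\nabla^d f=0$ (Riemann symmetries), one is left with $d\lambda\wedge df=0$. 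Hence on the real-analytically dense set $\{\nabla f\neq 0\}$ one has $\nabla\lambda=\kappa\,\nabla f$, with $\nabla f$ a Ricci eigenvector of (constant) eigenvalue $-(n-1)\kappa$; thus $\kappa$ is locally constant and $\lambda$ is affine in $f$.

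To pin down the Einstein condition, I compute $\Delta\lambda$ in two ways. First, $\Delta\lambda=\kappa\Delta f=\kappa(n\lambda-S)$, from the trace of the soliton equation. Second, taking the divergence of $\rho(\nabla f)=-(n-1)\nabla\lambda$ and using the contracted Bianchi identity with $\nabla S=0$ gives $(n-1)\Delta\lambda=|\rho|^2-\lambda S$. Equating the two and using that $\lambda$ is non-constant forces $\kappa=-S/(n(n-1))$ and $|\rho|^2=S^2/n$; the latter is the equality case of $|\rho|^2\ge S^2/n$, so $\rho=(S/n)g$. Moreover $S\neq 0$, for otherwise $\kappa=0$ and $\lambda$ would be constant, contradicting properness.

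The soliton equation now reads $\mathrm{Hess}\,f=(\lambda-S/n)g$ with non-constant coefficient, so $\nabla f$ is a non-trivial closed conformal vector field and $(M,g)$ is locally a warped product $I\times_\psi F$ with $\psi''/\psi=-S/(n(n-1))=:-K_0$. Its sectional curvatures are $K_0$ on planes containing $V=\nabla f/|\nabla f|$ and $(K_F-\psi'^2)/\psi^2$ on planes tangent to $F$. Curvature homogeneity of $M$, together with the non-constancy of $\psi$ (itself ensured by local irreducibility via Lemma \ref{lemma-irreducibility}), forces $F$ to be isotropic and hence of constant sectional curvature $K_F$; the first integral $\psi'^2+K_0\psi^2\equiv E$ of $\psi''+K_0\psi=0$ then gives $K_F=E$, so the horizontal sectional curvature also equals $K_0$ and $(M,g)$ has constant sectional curvature $K_0\neq 0$. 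The main obstacle is the Einstein step, where curvature homogeneity, being much weaker than local homogeneity, has to be combined with the Codazzi identity and the properness of $\lambda$ to produce the rigid Cauchy--Schwarz equality $|\rho|^2=S^2/n$.
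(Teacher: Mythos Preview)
Your Einstein step coincides with the paper's (it is exactly the argument of Theorem~\ref{th:hoy}/Remark~\ref{re:2.5}): from $(n-1)\Delta\lambda=|\rho|^2-\lambda S$ and $\Delta\lambda=\kappa(n\lambda-S)$ with $\lambda$ nonconstant one gets $|\rho|^2=S^2/n$, hence Einstein, and $\kappa=-S/(n(n-1))\neq 0$ gives $S\neq 0$ for free.

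Where you diverge is the passage from Einstein to constant sectional curvature. The paper notes that curvature homogeneity makes $\|W\|^2$ constant and then, since $\nabla f$ is a non\-homothetic conformal field, computes $\nabla f\cdot\|W\|^2=c\,(\lambda-S/n)\,\|W\|^2$ with $c\neq 0$; nonconstancy of $\lambda-S/n$ forces $W=0$. Your warped-product route also works: at $(t,x)\in I\times_\psi F$ the curvature operator has eigenvalue $K_0$ on $\partial_t\wedge TF$ and eigenvalues $K_0+(\mu_F(x)-E)/\psi(t)^2$ on $\Lambda^2TF$ (with $E=\psi'^2+K_0\psi^2$ the first integral and $\mu_F$ the eigenvalues of $\mathcal R_F$); curvature homogeneity makes this spectrum independent of $t$, and since $\psi$ is nonconstant each $\mu_F$ must equal $E$, so $F$ has constant curvature $E$ and all sectional curvatures of $M$ equal $K_0$. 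You should spell this out---``forces $F$ to be isotropic'' is doing real work and is not immediate, because the linear isometry in the definition of curvature homogeneity need not respect the splitting. The paper's $\|W\|^2$ argument is shorter; yours is more hands-on and yields $K_0\neq 0$ directly without Lemma~\ref{lemma-irreducibility} (which, incidentally, you do not need for the nonconstancy of $\psi$ either: that already follows from $\mu=\lambda-S/n$ nonconstant).

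Two minor points: the phrase ``real-analytically dense'' for $\{\nabla f\neq 0\}$ is unjustified; instead observe that on any open set where $\nabla f\equiv 0$ the soliton equation gives $\rho=\lambda g$ directly, so nothing is lost. You also omit the (easy) converse direction.
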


Due to the irreducibility of proper gradient Ricci almost solitons established in Lemma \ref{lemma-irreducibility}, a gradient Ricci almost soliton is rigid if and only if it is Einstein. 
The existence of complete Einstein proper gradient  Ricci almost solitons which are not of constant sectional curvature (see \cite[Theorem 2.3]{PRRS}) evinces the existence of some inaccuracies in  \cite[Theorem 1]{BGR2013}, where it is claimed that any proper gradient  Ricci almost soliton whose Ricci tensor is Codazzi (i.e., its covariant derivative is totally symmetric $(\nabla_X\rho)(Y,Z)=(\nabla_Y\rho)(X,Z)$) is of constant curvature. Explicit examples of non-Einstein gradient Ricci almost solitons with Codazzi Ricci tensor can be constructed as follows. Let  $M=\{{\vec{x}=(x_1,\dots,x_n)}\in\mathbb{R}^n : x^n>0\}$ and, for $n\geq 3$, consider the metric  $g=x_n^\frac{4}{n-2}\left( dx_1^2+\cdots+dx_n^2\right)$. Then the Ricci tensor is Codazzi but not parallel (cf.  \cite{Gray}) and a straightforward  calculation shows that
$$
	{f(\vec{x})=}\frac{(n-2) x_n^{\frac{n+2}{n-2}}}{n+2}+\frac{2 n \log \left(x_n\right)}{n+2}
	\quad \text{and} \quad 
	{\lambda(\vec{x})=} \frac{2}{(n-2) x_n} { +} \frac{2 x_n^{-\frac{2 n}{n-2}}}{{n+2}}
$$ 
define a gradient  Ricci almost soliton on $(M,g)$.

The above shows that, in contrast with gradient Ricci solitons (cf. \cite{PW1}), gradient Ricci almost solitons whose Ricci tensor is Codazzi  are not necessarily rigid, although they have constant scalar curvature. Hence we investigate some rigidity conditions as follows:

\begin{theorem}\label{th:rigidity}
Let $(M,g,f,\lambda)$ be an $n$-dimensional gradient Ricci almost soliton with Codazzi Ricci tensor. 
\begin{enumerate}
\item[(i)] If $(M,g)$ is radially flat (i.e., $R(\,\cdot\,,\nabla f)\nabla f=0$), then it is a rigid gradient Ricci soliton.
\item[(ii)]  If $\nabla f$ is an eigenvector of the Ricci operator with $\operatorname{Ric}(\nabla f)=\frac{\tau}{n}\nabla f$ then $(M,g)$ is Einstein, where $\tau$ denotes the scalar curvature.
\end{enumerate}
\end{theorem}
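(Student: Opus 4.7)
The overall strategy is to derive from the Codazzi hypothesis a single ``master identity'' relating the radial components of the Riemann tensor to $d\lambda$, and then to exploit the extra assumption in each part. Differentiating $H_f+\rho=\lambda g$ in a direction $X$, antisymmetrizing in $(X,Y)$, and using the standard commutator formula $(\nabla_X H_f)(Y,Z)-(\nabla_Y H_f)(X,Z)=R(X,Y,\nabla f,Z)$, the Codazzi condition kills the difference of Ricci derivatives and yields
$$
R(X,Y,\nabla f,Z)=d\lambda(X)\,g(Y,Z)-d\lambda(Y)\,g(X,Z).
$$
Contracting in $Y$ and $Z$ gives the trace consequence $\operatorname{Ric}(\nabla f)=-(n-1)\nabla\lambda$, while contracting Codazzi against the second Bianchi identity forces $\tau$ to be constant.

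For part (i), I substitute $Y=\nabla f$ in the master identity; the left side vanishes by radial flatness, and choosing $X=Z$ orthogonal to $\nabla f$ forces $d\lambda(\nabla f)=0$. Substituting back, $d\lambda=0$ on $\{\nabla f\neq 0\}$. At interior points of $\{\nabla f=0\}$ the soliton equation reduces to $\rho=\lambda g$, so $\lambda=\tau/n$ there (a constant); hence $d\lambda$ vanishes on a dense subset and so identically. Thus $\lambda$ is constant and $(M,g,f)$ is a classical gradient Ricci soliton. The rigidity conclusion then follows from the Petersen--Wylie rigidity criterion: Codazzi Ricci together with constant scalar curvature makes the Cotton tensor vanish, hence the Weyl tensor is harmonic, and radial flatness is in the hypotheses.

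For part (ii), the trace identity $\operatorname{Ric}(\nabla f)=-(n-1)\nabla\lambda$ combined with the assumption $\operatorname{Ric}(\nabla f)=\frac{\tau}{n}\nabla f$ gives $\nabla\lambda=-\frac{\tau}{n(n-1)}\nabla f$. To show Einstein, I differentiate the assumption in an arbitrary direction $X$, using $\nabla_X\nabla f=\lambda X-\operatorname{Ric}(X)$ from the soliton equation:
$$
(\nabla_X\operatorname{Ric})(\nabla f)=\frac{\tau\lambda}{n}X-\Bigl(\frac{\tau}{n}+\lambda\Bigr)\operatorname{Ric}(X)+\operatorname{Ric}^2(X).
$$
The Codazzi hypothesis rewrites the left side as $(\nabla_{\nabla f}\operatorname{Ric})(X)$. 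Taking the trace in $X$, the left side becomes $\nabla_{\nabla f}\tau=0$, while the right side collapses (after the $\tau\lambda$ cancellation) to $|\operatorname{Ric}|^2-\tau^2/n$. Hence $|\operatorname{Ric}|^2=\tau^2/n$, and Cauchy--Schwarz forces $\operatorname{Ric}=(\tau/n)g$.

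The main obstacle I anticipate lies in part (i): the reduction to $\lambda=\mathrm{const}$ is straightforward from the master identity, but the passage from ``gradient Ricci soliton'' to ``rigid'' is nontrivial and essentially amounts to invoking (or re-proving) the Petersen--Wylie rigidity theorem, typically via a splitting argument using the vanishing of radial curvature, constant $|\nabla f|^2$ on level sets of $f$, and the de Rham decomposition. Part (ii), in contrast, is a self-contained tensorial computation once one notices the trick of differentiating the Ricci-eigenvector condition and contracting via Codazzi.
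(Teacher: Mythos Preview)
Your argument is correct in both parts. Part~(i) is essentially the paper's proof: from the Codazzi identity the curvature equation reduces to the ``master identity'', radial flatness forces $d\lambda=0$, and the rigidity conclusion is then the Petersen--Wylie theorem (constant scalar curvature, harmonic Weyl tensor, radially flat). The paper phrases this via $\nabla\lambda=\varphi\nabla f$ and reads off $K(X,\nabla f)=-\varphi$, but the content is identical.

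Part~(ii), however, follows a genuinely different route from the paper. The paper first observes that $\rho(\nabla f,\nabla f)=-(n-1)\varphi\|\nabla f\|^2$ together with the eigenvalue hypothesis forces $\varphi=\tau/(n(1-n))$ to be \emph{constant}; it then takes the divergence of $\operatorname{Ric}(\nabla f)=(1-n)\varphi\nabla f$ and uses the Bochner-type identity $\operatorname{div}(\operatorname{Ric}\nabla f)=\tfrac{\tau}{n}\Delta f-\|H_f-\tfrac{\Delta f}{n}g\|^2$ to conclude that the traceless Hessian vanishes, whence the Ricci operator is a multiple of the identity. Your approach instead differentiates the eigenvector relation itself, swaps via Codazzi to $(\nabla_{\nabla f}\operatorname{Ric})(X)$, and traces to obtain $|\operatorname{Ric}|^2=\tau^2/n$ directly, finishing with Cauchy--Schwarz. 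Your method is shorter and does not require isolating the constancy of $\varphi$ as a separate step (indeed it works uniformly, including at points where $\nabla\lambda=0$, a case the paper's proof leaves implicit). The paper's approach, on the other hand, yields the slightly stronger intermediate statement that $H_f$ is pure trace, which is of independent interest.
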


Finally some rigidity criteria are given for gradient Ricci almost solitons in terms of the Ricci curvatures. Recall that a complete gradient Ricci soliton is rigid if and only if the Ricci curvatures take values $\rho_{i}\in\{ 0,\lambda\}$ \cite{FL-GR}. Also, a gradient shrinking (resp., expanding) Ricci soliton is rigid if and only if the scalar curvature is constant and the Ricci curvatures are bounded $0\leq\rho_i\leq\lambda$ (resp., $\lambda\leq\rho_i\leq 0$) \cite{PW1}. These results extend to the gradient Ricci almost soliton setting as follows:

\begin{theorem}
\label{th:2}
Let $(M,g,f,\lambda)$ be a gradient Ricci almost soliton. 
\begin{enumerate}
\item[(i)] If $M$ is compact and the Ricci curvatures  $\rho_{i}\in\{ 0,\lambda\}$, then $(M,g)$ is Einstein.
\item[(ii)] If the scalar curvature is constant and $0\leq\rho_i\leq\lambda$, then the soliton function $\lambda$ is constant provided that it attains a global minimum. 
\item[(iii)] If the scalar curvature is constant and  $\lambda\leq\rho_i\leq0$, then the soliton function $\lambda$ is constant provided that it attains a global maximum. 
\end{enumerate} 
\end{theorem}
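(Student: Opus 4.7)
The three parts rest on a single divergence identity. Taking the divergence of the soliton equation $H_f+\rho=\lambda g$ and applying the contracted second Bianchi identity $\operatorname{div}\rho=\tfrac{1}{2}d\tau$ together with the trace equation $\Delta f=n\lambda-\tau$, one obtains
\[
\rho(\nabla f)=\tfrac{1}{2}\nabla\tau-(n-1)\nabla\lambda.
\]

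For part (i), the hypothesis $\rho_i\in\{0,\lambda\}$ at every point is equivalent to $\operatorname{Ric}^{2}=\lambda\operatorname{Ric}$ as an operator, hence to the pointwise identity $|\rho|^{2}=\lambda\tau$. Since $H_f=\lambda g-\rho$, I would take the $g$-inner product with $\rho$ to get $\langle H_f,\rho\rangle=\lambda\tau-|\rho|^{2}=0$. Integrating on the compact $M$ and integrating by parts via $\operatorname{div}\rho=\tfrac{1}{2}d\tau$ and $\Delta f=n\lambda-\tau$ produces $\int_M\tau^{2}\,dV=n\int_M\lambda\tau\,dV$. Combined with the Cauchy--Schwarz bound $|\rho|^{2}\ge\tau^{2}/n$ and the pointwise identity $|\rho|^{2}=\lambda\tau$, this forces equality in Cauchy--Schwarz at every point, so $\rho=(\tau/n)g$; a Schur argument (divergence of $\rho=(\tau/n)g$ compared with $\operatorname{div}\rho=\tfrac12 d\tau$) then makes $\tau$ constant and $(M,g)$ Einstein.

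For parts (ii) and (iii), constant $\tau$ reduces the basic identity to $\rho(\nabla f)=-(n-1)\nabla\lambda$. Taking one more divergence, with $\operatorname{div}\rho=0$ and $H_f=\lambda g-\rho$, produces the master equation
\[
(n-1)\Delta\lambda=|\rho|^{2}-\lambda\tau=\sum_{i=1}^{n}\rho_i(\rho_i-\lambda).
\]
Under the pinching $0\le\rho_i\le\lambda$ of (ii), each summand $\rho_i(\rho_i-\lambda)\le 0$, so $\Delta\lambda\le 0$ and $\lambda$ is superharmonic; the strong minimum principle on the connected manifold $M$ then forces $\lambda\equiv\mathrm{const}$ as soon as $\lambda$ attains its infimum. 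Part (iii) is the dual expanding situation: the pinching $\lambda\le\rho_i\le 0$ again makes each $\rho_i(\rho_i-\lambda)\le 0$, and the analogous extremum-principle argument, applied this time at the supremum of $\lambda$, delivers $\lambda\equiv\mathrm{const}$.

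The main technical obstacle is deriving the master identity $(n-1)\Delta\lambda=\sum_i\rho_i(\rho_i-\lambda)$ and recognising the algebraic-sign structure of its right-hand side in each pinching regime. Once that identity is in hand, part (i) is the compact integrated variant (Cauchy--Schwarz plus Schur), while parts (ii) and (iii) collapse to a one-line application of the strong extremum principle on a connected Riemannian manifold.
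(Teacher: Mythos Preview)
Your arguments for (ii) and for the master identity $(n-1)\Delta\lambda=\|\rho\|^{2}-\lambda\tau=\sum_i\rho_i(\rho_i-\lambda)$ are essentially the paper's: the paper obtains this identity by specializing the weighted-Laplacian formula $\tfrac12\Delta_f\tau=\lambda\tau-\|\rho\|^{2}+(n-1)\Delta\lambda$ to constant $\tau$, while you get it by taking the divergence of $\operatorname{Ric}(\nabla f)=-(n-1)\nabla\lambda$; these are the same computation.

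For (i) you take a genuinely different route. The paper writes $\tau=k\lambda$ with $k=\operatorname{rank}(\operatorname{Ric})$, feeds this into the weighted-Laplacian formula to obtain $(k+2-2n)\Delta\lambda=k\,g(\nabla f,\nabla\lambda)$, integrates to get $\int_M g(\nabla f,\nabla\lambda)=0$, and then concludes $(n-k)\int_M\lambda^{2}=0$. Your approach instead integrates the pointwise identity $\langle H_f,\rho\rangle=\lambda\tau-\|\rho\|^{2}=0$ by parts to get $\int_M\tau^{2}=n\int_M\lambda\tau$, and then combines this with the pointwise Cauchy--Schwarz inequality $\|\rho\|^{2}\ge\tau^{2}/n$ (equivalently $n\lambda\tau\ge\tau^{2}$) to force equality and hence $\rho=(\tau/n)g$, finishing with Schur. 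Your argument is correct and has the advantage of not implicitly assuming that the rank $k$ is constant, a point the paper's computation $\Delta_f\tau=k\,\Delta_f\lambda$ glosses over.

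There is, however, a genuine gap in your (iii). You correctly observe that the pinching $\lambda\le\rho_i\le 0$ again gives $\rho_i(\rho_i-\lambda)\le 0$, hence $(n-1)\Delta\lambda\le 0$, i.e.\ $\lambda$ is \emph{superharmonic} just as in (ii). But the strong extremum principle for superharmonic functions works at an interior \emph{minimum}, not at a maximum: a superharmonic function can perfectly well attain a global maximum without being constant. So ``the analogous extremum-principle argument, applied this time at the supremum of $\lambda$'' does not go through as written. The paper's proof of (iii) is the single phrase ``completely analogous'' and is equally silent on this point, so you are not diverging from the paper here --- but as stated your argument (and the paper's) does not close in case (iii). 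One would need either to reverse the sign of $\Delta\lambda$ under this pinching (which does not happen) or to replace the ``global maximum'' hypothesis by ``global minimum''.
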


\begin{remark}\label{re.nuevo}
\rm
The results in Theorem \ref{th:2} are still valid in the generic case of non-gradient Ricci almost solitons just assuming that they are either shrinking (i.e., the soliton function $\lambda>0$) or expanding ($\lambda<0$).
\end{remark}

\section{Proofs}
\label{se:Homogeneous}

\begin{proof}[Proof of Theorem \ref{lema:homras}]
Let $(M,g,X,\lambda)$ be an $n$-dimensional proper Ricci almost soliton. If $(M,g)$ is locally homogeneous, then  for each point $p\in M$ there exist a local basis of Killing vector fields $\{ \xi_1,\dots,\xi_n\}$. Taking the Lie derivative with respect to $\xi_i$ on the Ricci almost soliton equation,
one gets 
$$
\begin{array}{rcl}
d\lambda(\xi_i)g&=&\mathcal{L}_{\xi_i}(\mathcal{L}_Xg+\rho)
=\mathcal{L}_{\xi_i}\mathcal{L}_Xg\\
\noalign{\medskip}
&=&\mathcal{L}_X\mathcal{L}_{\xi_i}g+\mathcal{L}_{[\xi_i,X]}g
=\mathcal{L}_{[\xi_i,X]}g\,.
\end{array}
$$
Hence, $X_\xi=[\xi,X]$ is a conformal vector field  for each Killing vector field $\xi$ on $(M,g)$.

First of all, observe that if all vector fields $X_\xi$ vanish, then $d\lambda(\xi)=0$ for all Killing vector fields and thus $\lambda$ is constant due to the existence of local bases of Killing vector fields by local homogeneity. In this case the Ricci almost soliton becomes a Ricci soliton.
Hence we assume that there exists a Killing vector field $\xi$ on $(M,g)$ such that $X_\xi\neq 0$ is a non-Killing conformal vector field.

In the special case that $X_\xi=[\xi,X]$ is a homothetic vector field (i.e.,  $d\lambda(\xi)=\operatorname{const.}$),  fix a point $p\in M$ and use a basis of Killing vector fields $\{\xi_i\}$ to express $X_\xi(p)=\sum_{\ell}\kappa^\ell\xi_\ell(p)$. Then $Z=X_\xi-\sum_{\ell}\kappa^\ell\xi_\ell$ is a homothetic vector field which vanishes at $p\in M$. Since homothetic vector fields preserve the Ricci tensor ($\mathcal{L}_Z\rho=0$) and $Z(p)=0$, one has that the eigenvalues of the Ricci tensor vanish at $p\in M$. Hence $(M,g)$ is Ricci flat and thus flat by local homogeneity \cite{Spiro}.

Assume $X_\xi$ is a non-homothetic conformal vector field with divergence
$\operatorname{div}X_\xi=\frac{n}{2}d\lambda(\xi)$. 
Since $X_\xi$ is conformal, its local flow consists of conformal transformations (which preserve the  Weyl conformal curvature tensor of type $(1,3)$) and hence one has
$$
\mathcal{L}_{X_\xi}g=\frac{2}{n}\operatorname{div}(X_\xi)g\,,
\qquad
\mathcal{L}_{X_\xi}W=2\operatorname{div}(X_\xi)W\,,
$$
where $W$ denotes the Weyl conformal curvature tensor of type  $(0,4)$.
Hence, we have
$$
\begin{array}{rcl}
X_\xi \| W\|^2&=&X_\xi g(W,W)=(\mathcal{L}_{X_\xi}g)(W,W)+2 g(\mathcal{L}_{X_\xi}W,W)\\
\noalign{\medskip}
&=&d\lambda(\xi) g(W,W)+2nd\lambda(\xi) g(W,W) =(2n+1)\| W\|^2 d\lambda(\xi)\,.
\end{array}
$$
By local homogeneity, the norm of the Weyl conformal curvature tensor is constant on $M$, and thus $\| W\|^2=0$ since $d\lambda(\xi)\neq 0$. This shows that $(M,g)$ is locally conformally flat and hence locally symmetric \cite{Takagi}. Therefore $(M,g)$ is either of constant sectional curvature or a product $\mathbb{R}\times N(c)$ where $N(c)$ is a space of constant sectional curvature $c$, or a product $N_1(c)\times N_2(-c)$.

Finally observe that since any space of constant curvature $M(c)$ is Einstein, Ricci almost solitons are nothing but conformal vector fields, which shows that $M(c)$ is a Ricci almost soliton (see, for example \cite{Kanai83}) for any value of $c\in\mathbb{R}$. Moreover  recall that the products $\mathbb{R}\times N(c)$ are rigid gradient Ricci solitons just considering the potential function 
$f=\frac{\mu}{2} (\pi_\mathbb{R})^2$, where $\pi_\mathbb{R}$ denotes the projection on $\mathbb{R}$ and the constant $\mu$ is determined by the sectional curvature $\mu=(n-1)c$. Let $Z$ be a non-homothetic conformal vector field on $\mathbb{R}\times N(c)$ (i.e., $\mathcal{L}_Zg=\frac{2}{n}(\operatorname{div}Z)g$, with $\operatorname{div}Z\neq\operatorname{const.}$) Then $X=Z+\frac{1}{2}\nabla f$ defines a Ricci almost soliton on $\mathbb{R}\times N(c)$ since
$$
\mathcal{L}_{X}g+\rho=\mathcal{L}_Z g+ H_f+\rho=\left( \frac{2}{n}\operatorname{div}Z+\frac{n-1}{2}c\right) g\,.
$$
We finish the proof showing the non-existence of Ricci almost solitons in any product $N_1^{n_1}(c)\times N_2^{n_2}(-c)$ of two manifolds of  opposite constant sectional curvature. Since $N_1(c)\times N_2(-c)$ is locally symmetric, the Ricci tensor is parallel and 
thus any Ricci almost soliton satisfies
$$
\mathcal{L}_Xg=\lambda g-\rho=\lambda g-\left((n_1-1)c\, g_1\oplus (1-n_2)c\, g_2\right) \,,
$$
which shows that $X$ is a conformal collineation (also called an affine conformal vector field in the literature \cite{D}). Now it follows from \cite[Theorem 9]{Tashiro} that $X$ is a conformal vector field, which is a contradiction since  $N_1(c)\times N_2(-c)$ is not Einstein.
\end{proof}

\begin{remark}
\rm
A vector field $Z$ on a Riemannian manifold $(M,g)$ is said to be an affine conformal vector field if
$$
(\mathcal{L}_Z\nabla)(X,Y)=X(\sigma)Y+Y(\sigma)X-g(X,Y)\nabla\sigma\,,
$$
for some function $\sigma$ on $M$, and $Z$ is said to be an affine vector field if $\mathcal{L}_Z\nabla=0$. In this setting, it follows from \cite{Sharma} that if $(M,g,X,\lambda)$ is a Ricci almost soliton with parallel Ricci tensor, then $X$  is an affine conformal vector field, which becomes an affine vector field if and only if the function $\lambda$ is constant.

Now, work of Tashiro (see \cite{Tashiro}) implies that \emph{a complete proper Ricci almost soliton $(M,g,X,\lambda)$ with parallel Ricci tensor is either Einstein or a product $\mathbb{R}^k\times N$, where $N$ is an Einstein manifold}. Moreover, in the later case the proper Ricci almost soliton decomposes as $X=Z+\xi$ where $(M,g,Z)$ is a Ricci soliton on $\mathbb{R}^k\times N$ and $\xi$ is a conformal vector field as in the proof of Theorem \ref{lema:homras}.
\end{remark}

The next lemma shows that any Einstein Ricci almost soliton decomposes as a sum of a gradient Ricci almost soliton and a conformal vector field, thus extending \cite[Theorem 2.3]{PRRS} to the general non-gradient case.

\begin{theorem}\label{th:Einstein case}
$(M,g)$ is a complete Einstein proper Ricci almost soliton if and only it  is isometric (up to scaling) to:
\begin{enumerate}
\item[(i)] $\mathbb{R}^n$,  $\mathbb{S}^n$ or $\mathbb{H}^n$.
\item[(ii)] $\mathbb{R}\times_{\exp}N$, where $(N,g_N)$ is complete and Ricci flat.
\item[(iii)] $\mathbb{R}\times_{\cosh}N$, where $(N,g_N)$ is a complete Einstein manifold with scalar curvature $\tau_N=-1$.
\end{enumerate}
\end{theorem}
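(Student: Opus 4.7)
The plan is to reduce the Einstein proper Ricci almost soliton condition to the conformal Killing equation and then invoke classification results for complete Einstein manifolds admitting a non-homothetic conformal vector field. Indeed, if $(M,g)$ is Einstein with $\rho=\frac{\tau}{n}g$, then the soliton equation $\mathcal{L}_X g+\rho=\lambda g$ becomes $\mathcal{L}_X g=(\lambda-\tau/n)g$, so $X$ is a conformal vector field with conformal factor $\sigma=\lambda-\tau/n$. Properness ($\lambda$ non-constant) together with the fact that the Einstein condition makes $\tau$ constant forces $\sigma$ to be non-constant, so $X$ is genuinely non-homothetic. Hence the forward direction amounts to classifying complete Einstein manifolds carrying such a conformal vector field.

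For this classification I would invoke the classical results of Kuiper, Tashiro and Kanai. On an Einstein background a conformal vector field splits as $X=\nabla f+Y$ where $Y$ is Killing and the gradient-conformal part satisfies the Obata-type equation $H_f=\phi\,g$ for some function $\phi$. Tashiro's structure theorem then forces $(M,g)$ to be isometric, up to scaling, to a warped product $I\times_h N$ with potential $f$ depending only on the $I$-variable. Imposing the Einstein condition $\rho=k g$ on the warped product yields the radial ODE $(n-1)h''/h=-k$ together with a fibre equation of the form $\rho_N=[(n-2)(h')^2+h h'']g_N+k h^2 g_N$. A case analysis on the sign of $k$ and on the explicit solutions of the ODE then isolates the three families: the constant curvature models $\mathbb{R}^n$, $\mathbb{S}^n$ and $\mathbb{H}^n$ of case (i); the choice $h(t)=e^t$, which forces $N$ to be Ricci flat, giving (ii); and the choice $h(t)=\cosh t$, which forces $N$ to be Einstein with normalized scalar curvature $\tau_N=-1$, giving (iii).

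For the converse I would exhibit the vector field explicitly in each case. On the constant curvature spaces the existence of non-homothetic conformal vector fields is classical. On each warped product $\mathbb{R}\times_h N$ of type (ii) or (iii), a suitable lift of $\partial_t$ weighted by $h$ (or $h'$) provides a conformal vector field with non-constant conformal factor, and the Einstein condition on the total space then ensures that $\lambda=\sigma+\tau/n$ defines a proper Ricci almost soliton structure. These explicit constructions realize the vector-field plus function pair $(X,\lambda)$ and complete the equivalence.

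The principal obstacle is the careful application of Tashiro's classification in the complete, possibly non-compact, Einstein setting. In particular one must show that the decomposition $X=\nabla f+Y$ really produces a non-constant potential $f$, so that the gradient-conformal part generates a genuine warped product structure; this relies on propagating the non-constancy of $\sigma$ to that of $f$ via the Killing summand $Y$. Once the warped product structure is in place, the separation into cases (ii) and (iii) reduces to an ODE analysis for $h$ combined with the fibre Einstein equation, which is routine and yields the stated normalizations up to scaling.
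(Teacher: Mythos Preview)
Your proposal is correct and follows the same overall strategy as the paper---reduce the Einstein soliton equation to a conformal vector field, pass to an Obata-type equation, and invoke a classification of its complete solutions. The execution, however, differs in one point that happens to be exactly the ``principal obstacle'' you flag. Rather than decomposing $X=\nabla f+Y$ with $Y$ Killing and then arguing that the potential $f$ inherits non-constancy from the conformal factor, the paper works directly with the scalar $\varphi=\tfrac{1}{n}\operatorname{div}X=\tfrac{1}{2}(\lambda-\tau/n)$: on any Einstein manifold this function satisfies Obata's equation $H_\varphi+\tau\varphi\,g=0$ (a classical fact, cited to K\"uhnel--Rademacher), and it is non-constant precisely because $\lambda$ is. This sidesteps the decomposition entirely and with it any worry about the non-compact case. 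The paper then cites Kanai's Theorem~G for the classification of complete manifolds admitting a non-constant solution of Obata's equation, which packages your Tashiro-plus-ODE case analysis into a single reference.

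For the converse the paper again exploits $\varphi$: any non-zero solution of $H_\varphi+k\varphi\,g=0$ on an Einstein manifold serves as the potential of a \emph{gradient} proper Ricci almost soliton with $\lambda=\tau/n-k\varphi$ (except in the flat case, where it gives a genuine steady soliton). This both streamlines the argument---no need to build conformal fields model by model---and yields the slightly stronger conclusion that the soliton vector can always be taken to be a gradient.
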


\begin{proof}
If a Ricci almost soliton $\mathcal{L}_Xg+\rho=\lambda g$ is Einstein, then $X$ is a conformal vector field with divergence $\operatorname{div}X=\frac{n}{2}(\lambda-\frac{\tau}{n})$.
Moreover, since $(M,g)$ is Einstein,  
$\varphi=\frac{1}{n}\operatorname{div}X={\frac{1}{2}}(\lambda-\frac{\tau}{n})$ 
is a solution of the Obata's equation
$H_\varphi+\tau\varphi g=0$ (see, for example \cite{Kuhnel-Rademacher97}). If the Ricci almost soliton is proper (i.e., $\lambda\neq\operatorname{const.}$), then $\varphi$ is non-constant and $(M,g)$ is locally a warped product. Moreover it follows from previous work of Kanai \cite[Theorem G]{Kanai83} that in the complete setting $(M,g)$ is either a space of constant curvature or a special class of warped product as in $(ii)$--$(iii)$.

Conversely, if $(M,g)$ is an Einstein manifold admitting a non-zero solution of the Obata's equation
$H_\varphi+k\varphi g=0$, then it is a proper gradient Ricci almost soliton with potential function $f=\varphi$ and $\lambda=\frac{\tau}{n}-k\varphi$, unless $(M,g)$ is the Euclidean space (in which case $f=\varphi$ defines a steady gradient Ricci soliton on $\mathbb{R}^n$).
\end{proof}

\medskip

The non-existence of proper gradient Ricci almost solitons on $\mathbb{R}^n$ is clarified by the following

\begin{lemma}
\label{lemma-irreducibility}
Let $(M,g,f,\lambda)$ be a gradient Ricci almost soliton. If $(M,g)$ admits a non-trivial local de Rham decomposition, then $\lambda$ is constant.
\end{lemma}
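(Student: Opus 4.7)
My plan is to work locally in a product chart furnished by the de Rham decomposition, and then to exploit the block form forced on the Hessian by the soliton equation.

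Suppose $(M,g)$ admits a non-trivial local de Rham decomposition, so around any point we may write $(M,g)=(M_1,g_1)\times(M_2,g_2)$ with $\dim M_i=n_i\geq 1$. Choose local product coordinates $(x^a,y^\alpha)$, where the $x$'s parametrize $M_1$ and the $y$'s parametrize $M_2$. Because the Levi-Civita connection of a product metric has no mixed Christoffel symbols, the off-diagonal Hessian components reduce to mixed partials: $(H_f)_{a\alpha}=\partial_a\partial_\alpha f$. Since the product metric and the Ricci tensor of a product both vanish on mixed pairs, the soliton equation $H_f+\rho=\lambda g$ gives $\partial_a\partial_\alpha f=0$. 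Hence $f$ splits locally as $f(x,y)=f_1(x)+f_2(y)$.

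With this splitting, the $M_1\times M_1$ block of the soliton equation reads $H^{(1)}_{f_1}+\rho_1=\lambda\,g_1$, and the $M_2\times M_2$ block reads $H^{(2)}_{f_2}+\rho_2=\lambda\,g_2$, where $H^{(i)}$ and $\rho_i$ denote the Hessian and Ricci tensor on $(M_i,g_i)$. Taking $g_i$-traces gives
\begin{equation*}
n_1\lambda=\Delta_1 f_1+\tau_1,\qquad n_2\lambda=\Delta_2 f_2+\tau_2,
\end{equation*}
where each right-hand side depends only on the corresponding factor's variables.

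Therefore $\lambda$ is simultaneously a function of $x$ alone and of $y$ alone, so it is locally constant; by connectedness of $M$ it is constant on $M$. The main (and really only) subtlety is justifying the off-diagonal Hessian computation, which rests solely on the product structure of the Levi-Civita connection; once that is noted, the rest is routine tracing. This is exactly what is needed to combine with Theorem~\ref{th:Einstein case} to produce Corollary~\ref{th:lochomogars}, since a proper gradient almost soliton cannot split.
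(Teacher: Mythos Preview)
Your proof is correct and takes a genuinely different route from the paper's.  The paper does not work with the soliton equation $H_f+\rho=\lambda g$ directly; instead it invokes the derived curvature identity
\[
R(X,Y,\nabla f,Z)=(\nabla_X\rho)(Y,Z)-(\nabla_Y\rho)(X,Z)+d\lambda(Y)g(X,Z)-d\lambda(X)g(Y,Z),
\]
plugs in $X=(X_1,0)$, $Y=(0,Y_2)$, $Z=(0,Z_2)$, and uses that in a Riemannian product the mixed curvature and the mixed covariant derivatives of $\rho$ vanish, leaving $d\lambda(X_1)\,g_2(Y_2,Z_2)=0$; the symmetric choice gives $d\lambda(Y_2)=0$.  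Your argument is more elementary in that it never needs this curvature identity: you read off $\partial_a\partial_\alpha f=0$ from the soliton equation itself, split $f=f_1+f_2$, and then a trace shows $\lambda$ is separately a function of each factor.  As a bonus your argument exhibits the splitting $f=f_1+f_2$ and shows each factor is itself a gradient Ricci soliton with the common constant $\lambda$, i.e.\ the rigid structure, whereas the paper's proof only extracts the constancy of $\lambda$.  The paper's route is shorter once the curvature identity is on the table and avoids the separation-of-variables step, but yours is self-contained.
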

\begin{proof}
Let $(M,g,f,\lambda)$ be an $n$-dimensional gradient Ricci almost soliton, i.e.,  $H_f+\rho=\lambda g$.
One has the following immediate consequences of the equation (see for example \cite{Barros-Ribeiro12, PRRS}):
\begin{equation}
\label{eq:gars}
\begin{array}{l}
\Delta f+\tau=n\lambda\,,\\
\noalign{\medskip}
\frac{1}{2}\nabla\tau+\operatorname{Ric}(\nabla f)+\nabla\Delta f=\nabla\lambda\,,\\
\noalign{\medskip}
R(X,Y,\nabla f,Z)=(\nabla_X\rho)(Y,Z)-(\nabla_Y\rho)(X,Z)\\
\noalign{\medskip}
\phantom{R(X,Y,\nabla f,Z)=}
+d\lambda(Y)g(X,Z)-d\lambda(X)g(Y,Z)\,.
\end{array}
\end{equation}
Assume that $(M,g)$ splits as a product $M=M_1\times M_2$, $g=g_1\oplus g_2$ and take vector fields on $M$
of the form $X=(X_1,0)$, $Y=(0,Y_2)$ and $Z=(0,Z_2)$. Then, since both $M_1$ and $M_2$ define parallel distributions on $M$, one has that $R(X,Y,\nabla f,Z)=0$ and 
$(\nabla_X\rho)(Y,Z)=0=(\nabla_Y\rho)(X,Z)$. Hence the third equation in \eqref{eq:gars} gives
$$
0=d\lambda(Y)g(X,Z)-d\lambda(X)g(Y,Z)=-d\lambda(X_1)g_2(Y_2,Z_2),
$$
from where it follows that the soliton function $\lambda$ is constant on $M_1$. A completely analogous argument shows that $\lambda$ is also constant on $M_2$ and $(M,g,f,\lambda)$ becomes a gradient Ricci soliton.
\end{proof}

\medskip

\begin{remark}\rm
\label{re:irreducibility}
Observe that while proper gradient Ricci almost solitons are irreducible, there are non-irreducible examples in the non-gradient case as shown in Theorem \ref{lema:homras}.
\end{remark}

\begin{proof}[Proof of Corollary \ref{th:lochomogars}]
Let $(M,g,f,\lambda)$ be a locally homogeneous proper gradient Ricci almost soliton. It follows from 
Theorem \ref{lema:homras} that $(M,g)$ is locally symmetric and hence Einstein or a product of Einstein spaces. Irreducibility of proper gradient Ricci almost solitons shows that $(M,g)$ is Einstein and thus of non-zero constant sectional curvature since it is locally conformally flat. 
\end{proof}

Let $\rho_1,\dots,\rho_n$ denote the Ricci curvatures (i.e., the eigenvalues of the Ricci operator defined by $g(\operatorname{Ric}X,Y)=\rho(X,Y)$). Clearly any curvature homogeneous Riemannian manifold has constant Ricci curvatures and hence constant scalar curvature.

\begin{theorem}
\label{th:hoy}
Let $(M,g,f,\lambda)$ be a proper gradient Ricci almost soliton. If the Ricci curvatures are constant then  $(M,g)$ is Einstein.
\end{theorem}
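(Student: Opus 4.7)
The plan is to combine the gradient Ricci almost soliton equation with the hypothesis that the Ricci eigenvalues are constant (so that $\tau$ and $|\rho|^2$ are constants on $M$) in order to force $\nabla f$ and $\nabla \lambda$ to be proportional, and then extract Einsteinity from a scalar identity. First I would note that the second equation in \eqref{eq:gars}, together with $\Delta f = n\lambda - \tau$ and $\nabla \tau = 0$, gives immediately the key identity
\[
\operatorname{Ric}(\nabla f) = -(n-1)\nabla \lambda. \qquad (\star)
\]

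Next I would differentiate $(\star)$ along an arbitrary $X$ and apply the soliton equation $\nabla_X \nabla f = \lambda X - \operatorname{Ric}(X)$, obtaining
\[
(\nabla_X \rho)(\nabla f, Y) + \lambda \rho(X, Y) - \rho^{(2)}(X, Y) = -(n-1) H_\lambda(X, Y),
\]
where $\rho^{(2)}(X, Y) := g(\operatorname{Ric}^2(X), Y)$. Since $H_\lambda$, $\rho$ and $\rho^{(2)}$ are all symmetric, antisymmetrising in $X, Y$ leaves only the identity $(\nabla_X \rho)(\nabla f, Y) = (\nabla_Y \rho)(\nabla f, X)$. Substituting $Z = \nabla f$ in the third equation of \eqref{eq:gars} and using $R(X, Y, \nabla f, \nabla f) = 0$ then reduces that identity to $d\lambda(X)\,df(Y) = d\lambda(Y)\,df(X)$, i.e., $d\lambda \wedge df = 0$.

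On the open set $U$ where $\nabla f \neq 0$, which is nonempty because properness and $(\star)$ force $\nabla f$ to be nonzero wherever $\nabla \lambda \neq 0$, the proportionality gives $\nabla \lambda = \mu\nabla f$ for some continuous $\mu$. Plugging back into $(\star)$, $-(n-1)\mu$ is then a pointwise eigenvalue of the Ricci operator, and because the Ricci eigenvalues form a finite constant set, $\mu$ must be locally constant on $U$. On a component where $\mu \equiv \mu_0 \neq 0$ one has $\lambda = \mu_0 f + c$. Taking the divergence of $(\star)$, using $\operatorname{div}\operatorname{Ric} = \tfrac12 \nabla\tau = 0$ and $\langle \rho, H_f \rangle = \lambda \tau - |\rho|^2$, yields the scalar identity $(n-1)\Delta \lambda = |\rho|^2 - \lambda \tau$. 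Combining this with $\Delta \lambda = \mu_0(n\lambda - \tau)$ gives an affine identity in $\lambda$; since $\lambda$ is non-constant, matching coefficients forces $\mu_0 = -\tau/[n(n-1)]$ and $|\rho|^2 = \tau^2/n$. Equality in Cauchy--Schwarz then forces all Ricci eigenvalues to coincide, so $(M,g)$ is Einstein.

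The main obstacle is establishing $d\lambda \wedge df = 0$: the third equation of \eqref{eq:gars} carries the awkward ``soliton correction'' $d\lambda(Y)g(X,Z) - d\lambda(X)g(Y,Z)$, which one can isolate only after independently deriving the Hessian-symmetry identity $(\nabla_X \rho)(\nabla f, Y) = (\nabla_Y \rho)(\nabla f, X)$ from the soliton equation and the constant-scalar-curvature identity $(\star)$, and then specialising to $Z = \nabla f$ so that the skew-symmetry of the Riemann tensor annihilates the curvature term on the left-hand side.
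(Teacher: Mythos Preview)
Your proof is correct and follows essentially the same route as the paper's: obtain $\operatorname{Ric}(\nabla f)=-(n-1)\nabla\lambda$ from constant $\tau$, use $d\lambda\wedge df=0$ to see that $\nabla f$ is a Ricci eigenvector with constant eigenvalue, and then plug $\Delta\lambda=\mu_0(n\lambda-\tau)$ into the scalar identity $(n-1)\Delta\lambda=\|\rho\|^2-\lambda\tau$ to force $\|\rho\|^2=\tau^2/n$. The only differences are cosmetic: the paper cites \cite{PRRS} for $d\lambda\wedge df=0$ and for the weighted Laplacian formula \eqref{eq:weighted-laplacian}, whereas you derive both directly (the latter by taking the divergence of $(\star)$), and your Cauchy--Schwarz endgame is exactly the alternative argument the paper records in Remark~\ref{re:2.5}.
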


\begin{proof}
Since the Ricci curvatures are assumed to be constant, in a suitable orthonormal frame one has
$\operatorname{Ric} = \operatorname{diag}[\rho_1,\dots,\rho_n]$.
Also, by \cite[Remark 2.5]{PRRS}, $d\lambda\wedge df=0$ and thus the soliton function $\lambda$ is a function of the potential function   $f$ in a neighborhood of any point $p\in M$ where $df\neq 0$. Henceforth set $\nabla\lambda = \varphi \nabla f$, where $\varphi=\pm\frac{\|\nabla\lambda\|}{\|\nabla f\|}$ is a smooth function. 

The weighted Laplacian of the scalar curvature of any $n$-dimensional gradient Ricci almost soliton $(M,g,f,\lambda)$ satisfies \cite{PRRS}
\begin{equation}\label{eq:weighted-laplacian}
\frac{1}{2}\Delta_f\tau=\lambda\tau-\|\rho\|^2+(n-1)\Delta\lambda\,.
\end{equation}

Since the scalar curvature $\tau$ of the manifold is constant, the first and second equations in  \eqref{eq:gars} imply that
$\operatorname{Ric}(\nabla f ) = (1-n)\nabla\lambda$ and therefore $\operatorname{Ric}(\nabla f) = (1-n)\varphi\nabla f$, thus showing that $\nabla f$ is an eigenvector of the Ricci operator. By the constancy of the Ricci curvatures one has that $\varphi$ is constant so we may assume without loss of generality that  $(1-n)\varphi=\rho_1$, from where  $\Delta\lambda=\frac{\rho_1}{1-n}\Delta f$. Now,   Equation \eqref{eq:weighted-laplacian} together with Equation \eqref{eq:gars} imply
\begin{equation}
\label{eq1}
0=\frac{1}{2}\Delta_f \tau= \lambda \tau -\|\rho\|^2+(n-1)\Delta \lambda
=\lambda \tau-\|\rho\|^2-\rho_1(n\lambda-\tau)\,.
\end{equation}
If $\tau\neq n\rho_1$, previous expression determines the soliton function as
\[
\lambda=\frac{\|\rho\|^2-\rho_1\tau}{\tau-n\rho_1}\,,
\]
which shows that, in this case,  $\lambda$ is constant and the manifold is a gradient Ricci soliton. 

Next consider the case $\tau=n\rho_1$. Then Equation (\ref{eq1}) becomes
$\|\rho\|^2=n\rho_1^2$. Moreover, since $\tau=\sum_{i=1}^n\rho_i=n\rho_1$, one gets $\rho_n=(n-1)\rho_1-\sum_{i=2}^{n-1}\rho_i$,
and hence
\[
\displaystyle{
n\rho_1^2
= \|\rho\|^2
= \sum_{i=1}^n\rho_i^2
= \sum_{i=1}^{n-1} \rho_i^2 + \left((n-1)\rho_1 - \sum_{i=2}^{n-1}\rho_i\right)^2\,.
}
\]
A straightforward calculation shows  that    this equation is equivalent to
\[
\sum_{i=2}^{n-1} (\rho_i-\rho_1)^2
+ \left(\sum_{i=2}^{n-1} (\rho_i-\rho_1)\right)^2 = 0 \,,
\]
so we conclude that all the Ricci curvatures must be equal. Therefore,  $\operatorname{Ric}=\rho_1\operatorname{Id}$  and   $(M,g)$ is Einstein.
\end{proof}

\medskip

\begin{remark}\label{re:2.5}
\rm
Observe from \eqref{eq1} that a gradient  Ricci almost soliton with constant Ricci curvatures satisfy
$0=\Delta_f \tau = \lambda \tau -\|\rho\|^2-\rho_1(n\lambda-\tau)$ where $\rho_1$ is the (constant) eigenvalue corresponding to the eigenvector $\nabla f$, and hence it is a gradient Ricci soliton if $\tau\neq n\rho_1$. Moreover, if $\tau=n\rho_1$, then 
$\|\rho\|^2=\rho_1\tau =n\rho_1^2$ and it follows from the Schwarz inequality that 
$\|\rho\|^2\ge \frac{\tau^2}{n}=n\rho_1^2$. Hence
$\|\rho\|^2= \frac{\tau^2}{n}$, which provides an alternative proof of the fact that 
any proper gradient Ricci almost soliton with constant Ricci curvatures is Einstein.
\end{remark}

\begin{proof}[Proof of Theorem \ref{th:curvhomog}]
Since $(M,g)$ is curvature homogeneous, the Ricci curvatures are necessarily constant and the
gradient Ricci almost soliton $(M,g,f,\lambda)$ is either a gradient Ricci soliton or $(M,g)$ is Einstein by Theorem \ref{th:hoy}. If $(M,g,f,\lambda)$ is an Einstein gradient Ricci almost soliton, then $\nabla f$ is a conformal vector field. Now it follows from the curvature homogeneity assumption that all the scalar curvature invariants which do not involve covariant derivatives of the curvature tensor (like $\tau$, $\|\rho\|^2$, $\| R\|^2$, $\|W\|^2$) are constant on $M$. Hence $(M,g)$ is locally conformally flat just proceeding as in the proof of Theorem \ref{lema:homras}, and thus of constant sectional curvature. Further the sectional curvature is non-zero by Lemma \ref{lemma-irreducibility}.
\end{proof}

\begin{proof}[Proof of Theorem \ref{th:rigidity}]
Let $(M,g,f,\lambda)$ be a gradient Ricci almost soliton. As in the proof of Theorem  \ref{th:hoy}, set $\nabla\lambda=\varphi\nabla f$ for some function $\varphi$ whenever $\nabla f\neq 0$. If $\rho$ is Codazzi then, proceeding as in \cite{BGR2013}, the third equation in \eqref{eq:gars} gives 
$$
R(X,\nabla f)\nabla f=\varphi g(\nabla f,\nabla f)X-\varphi g(\nabla f,X)\nabla f\,,
$$ 
from where it follows that the radial curvature satisfies $K(X,\nabla f)=-\varphi$
and the scalar curvature $\tau$ is constant. Now, if $(M,g,f,\lambda)$ is radially flat, then $\varphi=0$ and $\lambda$ is constant, thus resulting in a rigid gradient Ricci soliton  \cite[Theorem 1.2]{PW1} proving the Assertion $(i)$.

In order to show $(ii)$, assume $\varphi\neq 0$. Since $\rho$ is Codazzi, we have that 
$$
\frac{\tau}{n}\|\nabla f\|^2=\rho(\nabla f,\nabla f)=-(n-1)\varphi \|\nabla f\|^2
$$ 
from where $\tau=n(1-n)\varphi$, whenever $\nabla f\neq 0$. This shows that $\varphi$ is constant
and taking divergences in the second equation in \eqref{eq:gars} one has
$$
(1-n)\varphi \Delta f=\mathrm{div}(\operatorname{Ric}\nabla f)
=\frac{\tau}{n}\Delta f - \| H_f-\frac{\Delta f}{n} g\|^ 2\,.
$$
Since $\tau=n(1-n)\varphi$ one has that 
$\| H_f-\frac{\Delta f}{n} g\|^2=0$, which shows that the Hessian operator is a multiple of the identity, and hence so is the Ricci operator, which proves that $(M,g)$ is Einstein.
\end{proof}

\begin{proof}[Proof of Theorem \ref{th:2}]
It follows from the assumption in Theorem \ref{th:2}-$(i)$ that, if the Ricci curvatures take values $\rho_{i}\in\{ 0,\lambda\}$, then the scalar curvature is a multiple of $\lambda$, $\tau=k\lambda$, where $k=\operatorname{rank}(\operatorname{Ric})$. Hence
$\lambda\tau-\|\rho\|^2=0$ and thus Equation (\ref{eq:weighted-laplacian}) implies that
$$
\frac{k}{2}\Delta_f\lambda=\frac{1}{2}\Delta_f\tau=(n-1)\Delta\lambda\,,
$$
from where one has $(k+2-2n)\Delta\lambda=kg(\nabla f,\nabla\lambda)$.
Integrating the previous identity on $M$ gives $\int_Mg(\nabla f,\nabla\lambda)=0$, and hence using Equation (\ref{eq:gars}) we get
$$
0=\int_M\lambda\Delta f=\int_M\lambda(n\lambda-\tau)=(n-k)\int_M\lambda^2\,,
$$
from where it follows that $k=n$ and hence $(M,g)$ is Einstein, thus showing Assertion $(i)$.

Next assume that  the scalar curvature is constant and that the Ricci curvatures are  bounded as $0\leq\rho\leq\lambda$. Once again, the weighted Laplacian of the scalar curvature gives (cf. Equation (\ref{eq:weighted-laplacian}))
$$
0=\frac{1}{2}\Delta_f\tau=\lambda\tau-\|\rho\|^2+(n-1)\Delta\lambda\,,
$$
and thus
$$
(n-1)\Delta\lambda=\|\rho\|^2-\lambda\tau=\operatorname{trace}(\operatorname{Ric}\circ(\operatorname{Ric}-\lambda\operatorname{Id}))\leq 0\,.
$$
Now, if the soliton function $\lambda$ is non-negative, it is necessarily constant by the strong maximum principle, provided that it attains a global minimum on $M$, which shows Assertion $(ii)$.
The proof of Assertion $(iii)$ is completely analogous.
\end{proof}

\begin{proof}[Proof of Remark \ref{re.nuevo}]
Let $(M,g,X,\lambda)$ be an $n$-dimensional compact shrinking or expanding Ricci almost soliton whose Ricci curvatures take values $\rho_i\in\{0,\lambda\}$. Proceeding as in the proof of Theorem \ref{th:2}, the scalar curvature $\tau$ is a multiple of the soliton function $\tau=\lambda\operatorname{rank}(\operatorname{Ric})$.
Tracing in \eqref{eq:RicciSoliton} one has 
$\tau=\lambda\operatorname{rank}(\operatorname{Ric})=n\lambda-2\operatorname{div}X$, and hence
$$
0=2\int_M\operatorname{div}X=(n-\operatorname{rank}(\operatorname{Ric}))\int_M\lambda
$$
from where it follows that $(M,g)$ is Einstein, and thus a Euclidean sphere \cite[Corollary 1]{BBR-2014}.

Assertion $(ii)$ in Theorem \ref{th:2} are also valid when the generic case of non-gradient Ricci almost solitons is considered. Take  the $X$-Laplacian $\Delta_X\psi=\Delta\psi-\langle X,\nabla\psi\rangle$ to compute (see, for example, \cite{BBR-2014})
$$
0=\frac{1}{2}\Delta_X\tau=\lambda\tau-\|\rho\|^2+(n-1)\Delta\lambda\,,
$$
from where $(n-1)\Delta\lambda=\|\rho\|^2-\lambda\tau=\operatorname{trace}(\operatorname{Ric}\circ(\operatorname{Ric}-\lambda\operatorname{Id}))\leq 0$. Then the constancy of the soliton function follows as an application of the maximum principle. Assertion $(iii)$ is also valid in the generic case, just proceeding in an analogous way.
\end{proof}

\medskip

\begin{remark}\rm
\label{re:final}
Let $(M,g,f,\lambda)$ be a gradient  Ricci almost soliton with constant scalar curvature $\tau$. Equation \eqref{eq:gars} implies  that 
$\operatorname{Ric}(\nabla f)=(1-n)\nabla\lambda$. Since $\lambda$ is a function of $f$, one has that $\nabla\lambda\in\operatorname{span}\{\nabla f\}$, and thus $\nabla f$ is an eigenvector of the Ricci operator. Hence, $\nabla f$ is also an eigenvector of the Hessian operator, which shows that \emph{the integral curves of $\nabla f$ are unparametrized geodesics in the constant scalar curvature setting}.
\end{remark}

\end{document}